\numberwithin{equation}{section}
\renewcommand{\a}{\alpha}
\renewcommand{\b}{\beta}
\newcommand{\e}{\epsilon}
\renewcommand{\k}{\kappa}
\renewcommand{\l}{\lambda}
\newcommand{\n}{\nu}
\newcommand{\F}{\Phi}
\renewcommand{\o}{\omega}
\renewcommand{\O}{\Omega}
\newcommand{\C}{{\mathbb C}}
\newcommand{\R}{{\mathbb R}}
\newcommand{\tb}{{\mathbf t}}
\newcommand{\Fb}{{\mathbf F}}
\newcommand{\Gb}{{\mathbf G}}
\newcommand{\Pb}{{\mathbf P}}
\newcommand{\Tb}{{\mathbf T}}
\newcommand{\cF}{\mathfrak c}
\newcommand{\DF}{\mathfrak D}
\newcommand{\FF}{\mathfrak F}
\newcommand{\kF}{\mathfrak k}
\newcommand{\KF}{\mathfrak K}
\newcommand{\pF}{\mathfrak p}
\newcommand{\PF}{\mathfrak P}
\newcommand{\Ac}{{\mathcal A}}
\newcommand{\Bc}{{\mathcal B}}
\newcommand{\Hc}{{\mathcal H}}
\newcommand{\Kc}{{\mathcal K}}
\newcommand{\vol}{{\rm vol}\,}
\DeclareMathOperator{\re}{{\rm Re}\,}
\newcommand{\dbar}{\overline{\partial}}
\newtheorem{theorem}{Theorem}[section]
\newtheorem{corollary}[theorem]{Corollary}
\theoremstyle{definition}
\newtheorem{definition}[theorem]{Definition}
\theoremstyle{remark}
\newtheorem{remark}[theorem]{Remark}
\begin{document}

\title[Finite Rank Theorem]{The finite rank theorem for Toeplitz operators on the Fock space}
\author[Borichev]{Alexander Borichev}
\address{LATP, Aix-Marseille Universit\'e, Marseille, France}
\email{borichev@cmi.univ-mrs.fr}
\author[Rozenblum]{Grigori Rozenblum}
\address{Department of Mathematics, Chalmers  University of Technology,   Gothenburg, Sweden}
\email{grigori@chalmers.se}

\begin{abstract} We consider Toeplitz operators on the Fock space, under rather general conditions imposed on the symbols. It is proved that if the operator has finite rank  then the operator and the symbol should be zero.  The method of proof is different from the ones used previously for finite rank theorems, and it enables us to get rid of the compact support condition and even allow a certain growth of the symbol.
\end{abstract}
\maketitle
\section{introduction}\label{intro}

The paper is devoted to the study of the finite rank problem for  Toeplitz operators on the Fock (Bargmann--Segal) spaces. Such operators were  introduced by F.~A.~Berezin in \cite{Ber}, in the framework  of his general quantization program, and  were being extensively studied from different points of view further on, see, especially, \cite{Cob2} and the books \cite{Zhu} and \cite{Vasil}. These operators are often called Berezin--Toeplitz ones, and they form a special class of Toeplitz operators in Bergman type spaces.

Properties of Toeplitz operators  on Bergman type spaces attract currently considerable interest, due to an expanding range of applications in Analysis and Mathematical Physics. One of the questions that has been  under discussion recently is the one on finite rank operators.

Generally speaking, let $\Bc\subset L^2(\O)=L^2(\O,d\mu)$, with some measure $d\mu$, be a Bergman type space consisting of solutions of some elliptic equation or system in a domain $\O$ in a real or complex Euclidean space. Denote by $\Pb:L^2(\O)\to \Bc$ the orthogonal projection onto $\Bc$. For a function $F$, the Toeplitz operator $\Tb=\Tb(F)$ is the operator in $\Bc$ acting as $\Bc\ni u\mapsto \Pb F u\in \Bc$. Here, $F$ is called the \emph{symbol} of the Toeplitz operator. This definition is unambiguous for the case of a bounded function $F$. However, the formula defining the action of the operator can be assigned an exact meaning also for certain unbounded functions $F$, for measures and even for some distributions. A detailed description of such Toeplitz operators  can be found in \cite{AlexRoz}, \cite{RozToepl}, \cite{Rao}; we give more explanations below. For  $\Bc$, the most common examples are the space of analytical functions in a bounded domain in $\C^d$ (say, disk, ball, polydisk) -- the classical Bergman spaces, as well as  the space of the entire functions in $\C^d$ square integrable with the Gaussian weight -- the Bargmann--Fock space,  similar spaces of harmonic functions etc.

The finite rank problem consists in the following. Suppose that for some symbol $F$, the operator $\Tb(F)$ has finite rank. What can be said about $F$ in this case?

The starting point here was the paper \cite{Lue1} by D.~Luecking, where this problem was considered for  Toeplitz operators in the analytical Bergman space. It was stated there, see \cite[Lemma 2, p.365]{Lue1}, that for $F$ being a bounded function with compact support, only the trivial case may happen, i.e. the finite rank property implies that $F=0$. Unfortunately, the proof in \cite{Lue1} was not correct. Later, in \cite[Remark, p.515]{Cob} it was stated, without proof, that a Toeplitz operator of rank \emph{one} must be zero. According to the author of \cite{Cob} (private communication), the proof (which, eventually, became a part of the folklore) had been found by him together with C.~Berger, using an idea by R.~Rochberg. One can find an exposition of this short proof, e.g., in \cite{BauLe}.

Only in 2008, a correct proof of the finite rank theorem was found, again by Luecking \cite{Lue2}, using the ideas quite different from the ones of the initial approach as well as of the rank one proof  by Berger--Coburn. Moreover, a more general case has been considered. Under the assumption that $F$ is a \emph{complex finite regular measure} with compact support in $\C^1$, it was proved that the finite rank property for $\Tb(F)$ implies that $F$ is a finite collection of point masses.

Quite soon, a series of papers with generalizations and applications of the finite rank theorem appeared, see \cite{AlexRoz}, \cite{BauLe}, \cite{BRChoe1}, \cite{Le}, \cite{Le1}, \cite{Rao}, \cite{RShir}; an extensive presentation of the majority of the corresponding results can be found in \cite{RozToepl}. Luecking's theorem has been extended to the case of many variables, to Bergman spaces of harmonic functions or solutions of the Helmholtz equation, to symbols being distributions etc. Characteristical for all these papers was that  all of them  were based upon the approach used in \cite{Lue2}. This approach reduces the finite rank problem to a polynomial approximation one, with ultimate use of the Stone-Weierstrass theorem for a certain  algebra of symmetric polynomials. This final circumstance, when Fock--type spaces were being considered, led to the requirement for the symbol to have compact support, as the Stone--Weierstrass theorem needs. It was shown in \cite{BauLe}, \cite{GrVas} that if  the compact support condition is dropped, then the finite rank theorem may become wrong even for rank zero, unless some restrictions on the behavior of the symbol at infinity are imposed. The compact support conditions were relaxed
 in \cite{RozSW}, where for symbols-measures, very fast decaying at infinity, the finite rank theorem has been proved. Again, the approach by Luecking was used, and the result was established by introducing a new version of the Stone--Weierstrass theorem (proved in the same paper) where the uniform convergence of functions on a compact space is replaced by the weighted convergence on a locally compact space.
We mention also that W.~Bauer and T.~Le \cite{BauLe} extended the rank one theorem to symbols-functions satisfying some growth restrictions.

The decay conditions imposed on the symbols in \cite{RozSW}, look obviously excessive; on the other hand, the result in \cite{RozSW} does not apply to symbols-distributions, more singular than measures, which is an essential drawback for applications.
Therefore the attack on the finite rank problem was continued.

In the present paper we  return to the approach  used in \cite{Lue1} and in the proof of the rank one case, implicit in \cite{Cob}. We consider  a rather wide class of symbols-functions with  some mild restrictions on the behavior at infinity, and for such symbols we establish the finite rank theorem for Toeplitz operators in the Fock space.

We start in Section 2 with introducing the spaces of  symbols and give a detailed description of  Toeplitz operators in the Fock space. Then, in Section 3, we give the proof  of the finite rank theorem with functions serving as symbols.

In another paper the second author plans to present the proof 
of the finite rank result for symbols-distributions without the 
compact support condition, belonging to a certain class defined 
by restrictions on the behavior at infinity.
It is known, see, e.g., \cite{RozToepl}, that the finite rank property, once established for a Bergman type space of analytical functions, can be extended to some other Bergman type spaces. There are some specifics of that procedure when the compactness condition is dropped. This kind of questions will be dealt with elsewhere.

G.R. expresses his gratitude to the  Mittag-Leffler Institute where he was given an excellent possibility to work on the paper.

\section{Toeplitz operators in the Fock space. Classes of symbols}\label{Sect2}
\subsection{Operators with bounded symbols}
We start this section by recalling some basic facts concerning the Fock space and operators there.

We identify the plane $\R^2$ with the complex plane $\C$ and denote by $\n$ the normalized Gaussian measure, $d\n= \o(z) d\l$, where $d\l$ is the Lebesgue measure, $\o(z)=\pi^{-1} e^{-|z|^2}.$ (We choose this version of the weight, rather than the alternative one $(2\pi)^{-1}e^{-|z|^2/2}$ in order to be in conformity with \cite{BauLe}.) In the space $\Hc=L_2(\C,d\n)$ we consider the subspace $\Bc$, the Fock space, which consists of entire  functions. We denote by $(\cdot,\cdot)$ the scalar product in this space. The orthogonal projection $\Pb:\Hc\to \Bc$ is known to be an integral operator with smooth kernel,
\begin{equation}\label{2.kernel}
    (\Pb u)(z)=\int_{\C}\k(z,w){u(w)}d\n(w)=(u,\k(\cdot,z)),
\end{equation}
where
$\k(z,w)=e^{z\overline{w}}$. In particular, if $u\in\Bc,$ we have $\Pb u =u$, or
\begin{equation}\label{2.repro}
    u(z)=\int_\C \k(z,w){u(w)}d\n(w)=(u,\k(\cdot,z));\end{equation}
the latter equation is called the \emph{reproducing property} and $\k(z,w)$ is called the reproducing kernel.

For a  function $F$ defined on $\C$, the Toeplitz operator with symbol $F$ acts as an integral one,
\begin{equation}\label{2.Oper}
    (\Tb(F)u)(z)=(\Pb F u)(z)=\int_{\C}\k(z,w)F(w){u(w)}d\n(w),
\end{equation}
defined on such functions $u\in\Bc$ for which $\k(z,.) F u\in \Hc$.
If $F\in  L^\infty(\C)$, this operator is, obviously, everywhere defined and bounded in $\Bc$, as a product of bounded operators. The operator's sesquilinear  form is
\begin{equation}\label{2.form}
    \tb_F(u,v)=(\Tb(F)u,v)=\int_C F(w)u(w)\overline{v(w)}d\n(w).
\end{equation}
\subsection{Operators with unbounded symbols}
Our aim now is to define the operator for a larger class of symbols. There are several discussions of this topic in the literature, see, e.g., \cite{BauLe}, \cite{Ja1}, \cite{RozToepl}, \cite{RozSW} and references therein.

If we drop the boundedness condition for $F$, the Toeplitz operator is not necessarily bounded; it is defined on the set of functions $u\in\Bc$ satisfying  $\Tb(F)u\in \Bc.$ As in \cite{BauLe}, we introduce classes $\DF_\cF$, $\cF< 1$ by
\begin{equation}\label{1.Dc}
    \DF_\cF=\{F: \sup_{z\in\C}|F(z)|e^{-\cF|z|^2}<\infty\}.
\end{equation}
Furthermore, $\DF_{1,-}$ is the space of functions $F$ satisfying $|F(z)|=O(e^{|z|^2-a|z|})$, $|z|\to\infty$ for every $a<\infty$.

Generally, it is hard to describe explicitly the domain of the Toeplitz operator for an unbounded symbol.
If $F\in \DF_\cF$, $\cF<1/2$, then the domain of $\Tb(F)$ contains
all functions $u\in \Bc\cap \DF_{1/2-\cF}$ and, in particular, is
dense in $\Bc$. Under a less restrictive condition, $F\in \DF_{1,-}$,
the Toeplitz operator is still densely defined and, in particular, its
domain contains all analytical polynomials, as well as all
reproducing kernels $\k_z={\k(\cdot,z)}$, $z\in\C$ and their finite
linear combinations.  In the finite rank problem which we mainly
discuss in this paper, it is sufficient to consider the action of the
operator on these dense subsets.
Different extensions of the operator $\Tb(F)$ beyond \eqref{1.Dc}
are discussed in \cite{Ja1}, \cite{Ja2}, and in \cite{BauLe}.

In the analysis of the finite rank problem, it is  convenient to consider the sesquilinear form $\tb=\tb_F$,
 \begin{equation}\label{2.sesqForm}
    \tb_F(u,v)=(F,\bar{u}v)=\int_{\C} F(z){u(z)}\overline{v(z)} \o(z)d\l(z).
 \end{equation}

  If $F\in \DF_\cF, \cF<1$, then this form is defined at least on  functions $u,v\in \Bc\cap\DF_{\cF'}$, with $\cF'<(1-\cF)/2$.  The latter set is, again, dense in $\Bc$. If $F$ is a real function of  constant sign, then the sesquilinear form thus defined is closable and corresponds to a self-adjoint operator. In the general case, there is no natural way to associate a closed operator with the sesquilinear form \eqref{2.sesqForm}.  However, for $F\in \DF_{1,-}$, the sesquilinear form \eqref{2.sesqForm} is consistent with the action of the operator $\Tb(F)$ at least on the functions $u,v$ being  the reproducing kernels, $u=\k_z=\k(\cdot,z)$, $v=\k_{z'}=\k(\cdot,z')$, or analytical polynomials $u=p$, $v=q$, $p,q\in\mathcal P$ :
  \begin{equation}\label{2.form2}
    (\Tb(F)\k_z,\k_{z'})=\tb_F(\k_z,\k_{z'})
 \end{equation}
 and $(\Tb(F)p,q)=\tb_F(p,q)$.

\subsection{Finite rank operators and forms}
We start by recalling that an everywhere defined operator $\Tb$ in the Hilbert space $\Kc$ is said to be of finite rank if for some elements $f_j,g_j\in\Kc, j=1,\dots,N$
\begin{equation}\label{2.FRbounded}
    \Tb u=\sum_{j=1}^N (u,g_j)f_j
\end{equation}
for all $u\in\Kc.$ As usual, it is much more convenient to use the sesquilinear form in the study of the properties of operators. In the language of sesquilinear forms, equivalently,
\begin{equation}\label{2.FRforms}
    (\Tb u,v)=\sum_{j=1}^N (f_j,v)(u,g_j)
\end{equation}
for all $u,v\in\Kc.$ The smallest possible number $N$ in such representations is called the rank of the operator. For uniformity, we say that the zero operator and only it has rank 0, i.e., the sum on the right in \eqref{2.FRbounded}, \eqref{2.FRforms} is empty.
By \eqref{2.FRbounded}, a finite rank operator is automatically bounded.

We will consider a more general case, when equation
\eqref{2.FRforms} holds not for all $u,v\in \Kc$ but only for $u,v$ in some subset $\Kc^0\subset\Kc$. If, still, $f_j,g_j\in\Kc$ and $\Kc^0$ is dense in $\Kc$, these two definitions are equivalent, by  continuity. We, however, consider the cases when the representation \eqref{2.FRforms} holds with $f_j,g_j\not\in\Kc$
(with corresponding modification of the pairing).

We denote by $\Bc^\circ$ the space of entire analytical functions belonging to  $\DF_{1,-}$, $\Bc^\circ=\Ac\cap \DF_{1,-}$. For $f\in\Bc^\circ$ and $v$ of exponential growth, the expression $(f,v)$ is correctly defined, although $f$ is not necessarily in $\Bc:$
\begin{equation}\label{2.(fv)}
    (f,v)=\int_{\C}f(w)\o(w)\overline{v(w)}d\l(w),\qquad (v,f)=\overline{(f,v)},
\end{equation}
and this definition is consistent with the definition of the scalar product in the space $\Bc$. In particular, $(f,v)$ is defined, as in \eqref{2.(fv)},   for $v$ being an analytical polynomial or the reproducing kernel. By continuity,
the reproducing relation \eqref{2.repro} extends to all $f\in \Bc^\circ:$
\begin{equation}\label{2.repr.ext}
    (f, \k_z)=f(z).
\end{equation}
Note also that the latter equation admits differentiation in $z$, since the
derivative of $\k(z,\cdot)$ is, again, of exponential growth: $\partial_z^\a\k(z,w)=\bar w^\a\k(z,w)$.

Now we can give a definition of more general finite rank operators and forms.
\begin{definition}\label{2.FRdefGen}Let $F$ be a function in $\DF_{1,-}$. We say that the sesquilinear  form $\tb=\tb_F,$ defined in  \eqref{2.sesqForm}, \emph{has finite rank on reproducing kernels} if, for some functions $f_j,g_j\in \Bc^\circ$,
\begin{equation}\label{2.repr.gener}
  \tb(u,v)=\sum_{j=1}^N(u,{g_j})(f_j,v),
\end{equation}
for all $u,v$ being reproducing kernels, i,e., $u=\k_z, v=\k_{z'}$. In other words,
\begin{equation}\label{2.repr.frank}
    \tb(\k_z,\k_{z'})=\sum_{j=1}^N(\k_z,{g_j})(f_j,\k_{z'}).
\end{equation}
\end{definition}
For $f_j,g_j\in \Bc$, this definition is consistent with \eqref{2.FRforms}. However, for $f_j,g_j$ outside $\Bc$, the functionals on the right hand side in \eqref{2.repr.gener} are not continuous with respect to $u,v$ in the space $\Bc$, and therefore, a sesquilinear form $\tb$ is not necessarily \emph{a priori} bounded on $\Bc$. Such boundedness will only follow \emph{post factum} from the finite rank theorems of this paper.

In a similar way, we say that the sesquilinear form has finite rank on polynomials, if for some functions $f_j,g_j\in \Bc^\circ$ we have
\begin{equation}\label{2.repr.polyn}
    \tb(z^k,z^{k'})=\sum_{j=1}^N(z^k,g_j)(f_j,z^{k'}), \qquad k,k'\in\mathbb Z_+.
\end{equation}

It is easy to see that these two properties are equivalent. In one direction it follows from the relation $w^k=\partial^k_{\bar{z}} \k(w,z)|_{z=0}$, in the other direction, it follows from the Taylor expansion for  $\k(z,w)$.

The finite rank property of the sesquilinear form is closely related 
to some properties of infinite matrices. For an infinite system of distinct points $z_j\in \C$, we consider the matrix $\KF$ with elements $\kF_{k,l}=\tb(\k_{z_k},\k_{z_l})$.
Another infinite matrix, $\PF$, associated with the sesquilinear form $\tb$, is defined by setting $\pF_{k,l}=\tb(z^k,z^l)$. If the sesquilinear form has finite rank on polynomials (on reproducing kernels), then the matrices $\KF$ and $\PF$ have the same finite rank.  In \cite{RShir} the converse statement has been proved for $F$ with compact support. In the general case, it is unclear whether the converse statement is true.


\section{The finite rank theorem}\label{3.Section}
In this section we present a proof of the finite rank theorem,
using an approach different from that in \cite{Lue2}, for the symbols  $F$ being \emph{functions} in $\DF_{1,-}$. We do not assume that the symbols have compact support. This approach, actually, has been under discussion for a certain time, it has roots in the failed proof in \cite{Lue1} and in the rank one proof mentioned in \cite{Cob}. Explicitly, the reduction of the finite rank problem to a statement about analytical functions (however, for the case of bounded symbols) is described in \cite{BauLe},  see the discussion around Question D there.
\begin{theorem}\label{3.Th.Lu form} Suppose that the symbol $F$ is a function in $\DF_{1,-}$ and that the sesquilinear form $\tb(u,v)=(Fu,v)=(F,\bar{u}v)$ has finite rank on reproducing kernels. Then $F=0$.
\end{theorem}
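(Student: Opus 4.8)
The plan is to exploit the explicit form of the reproducing kernel $\k(z,w)=e^{z\bar w}$ to turn the finite rank hypothesis \eqref{2.repr.frank} into an identity between entire functions of two complex variables, and then to extract enough analyticity to force $F$ to vanish. Writing out the left-hand side of \eqref{2.repr.frank} explicitly,
\begin{equation}\label{prop.lhs}
  \tb(\k_z,\k_{z'})=\int_{\C}F(w)\,e^{z\bar w}\,\overline{e^{z'\bar w}}\,\o(w)\,d\l(w)
  =\int_{\C}F(w)\,e^{z\bar w+\bar{z'}w}\,\o(w)\,d\l(w),
\end{equation}
which, as a function of $(z,\bar{z'})$, is (after the Gaussian factor is absorbed) an entire function of the two independent variables $z$ and $\zeta:=\bar{z'}$; the growth condition $F\in\DF_{1,-}$ is exactly what makes this integral converge and makes the resulting function of controlled (order-two) growth. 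On the right-hand side, $(\k_z,g_j)=\overline{(g_j,\k_z)}=\overline{g_j(z)}$ by \eqref{2.repr.ext}, and similarly $(f_j,\k_{z'})=f_j(z')$, so \eqref{2.repr.frank} becomes $G(z,\zeta)=\sum_{j=1}^N \overline{g_j(z)}\,f_j(\zeta)$ after replacing $\bar{z'}$ by $\zeta$ — but the left side is holomorphic in $z$, so the antiholomorphic functions $\overline{g_j(z)}$ must in fact be replaced by honest holomorphic functions $h_j(z)$ (this is the standard polarization step: an identity holomorphic in $z$ and in $\bar{z'}$ separately, when the two are decoupled). Thus I would first establish the clean statement: there are entire functions $h_j,f_j$ of order at most two such that
\begin{equation}\label{prop.rank}
  \int_{\C}F(w)\,e^{zw+\zeta w}\,\o(w)\,d\l(w)=\sum_{j=1}^N h_j(z)\,f_j(\zeta)
  \qu{\text{for all }}z,\zeta\in\C,
\end{equation}
where I have also used that $\overline{e^{z'\bar w}}=e^{\bar{z'}w}$ and renamed; the precise bookkeeping of the Gaussian weight $\o(w)=\pi^{-1}e^{-|w|^2}$ I would carry out carefully but it is routine.

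The heart of the argument is then an induction on the rank $N$. The rank zero case says $\int F(w)e^{zw+\zeta w}\o(w)\,d\l(w)\equiv 0$; setting $\zeta=0$ and varying $z$ this is a statement that the "Gaussian–Fourier" transform of $F\o$ vanishes, from which $F=0$ by injectivity of this transform (this is essentially the uniqueness of the Bargmann transform, valid under the $\DF_{1,-}$ growth bound). For $N\ge1$, the strategy is to apply a differential operator in $z$ and $\zeta$ that annihilates one of the rank-one terms on the right. Concretely, I would differentiate \eqref{prop.rank} in $z$ and in $\zeta$: the key observation is that $\partial_z$ and $\partial_\zeta$ act on the kernel $e^{zw+\zeta w}$ identically (both bring down a factor of $w$), so $(\partial_z-\partial_\zeta)$ kills the integral's dependence in a way I can compare against the right-hand side. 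More usefully, fixing the point $\zeta=\zeta_0$ where, say, $f_N(\zeta_0)\ne0$ is awkward because that only removes a linear combination; instead I would use the classical trick (the one implicit in \cite{Cob} and \cite{BauLe}) of forming a Wronskian-type determinant: the functions $z\mapsto h_j(z)$ span an $N$-dimensional space, so there is a linear differential operator $L=\sum_{i=0}^N c_i(z)\partial_z^i$ with holomorphic coefficients annihilating all $h_j$ simultaneously (the ODE whose solution space they span), and applying $L$ in the $z$-variable to \eqref{prop.rank} gives
\begin{equation}\label{prop.killed}
  \int_{\C}F(w)\,\Big(L_z\,e^{zw+\zeta w}\Big)\,\o(w)\,d\l(w)=0,
  \qquad L_z\,e^{zw+\zeta w}=\Big(\sum_{i=0}^N c_i(z)w^i\Big)e^{zw+\zeta w}.
\end{equation}
This is now, for each fixed $z$, the vanishing of the Bargmann-type transform of the function $F(w)\sum_i c_i(z)w^i$, hence that function vanishes for a.e. $w$, for every $z$. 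Since the polynomial $\sum_i c_i(z)w^i$ (in $w$, with coefficients depending on $z$) is not identically zero, for each fixed $w$ outside a small set it is nonzero for all but finitely many $z$, forcing $F(w)=0$.

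The main obstacle I anticipate is making the injectivity statement for the Gaussian-weighted transform $F\mapsto \int F(w)e^{zw}\o(w)\,d\l(w)$ precise and valid under exactly the hypothesis $F\in\DF_{1,-}$ rather than under $L^2$ or compact-support assumptions — one must check that the integral converges, defines an entire function, and that its vanishing implies $F=0$ a.e.; the borderline growth $e^{|z|^2-a|z|}$ is presumably chosen so that $F\o\in L^1$ with a bit of room, so this should go through via, e.g., passing to the Fourier transform after completing the square in the exponent, but the details need care. A secondary technical point is justifying the polarization/decoupling that produces \eqref{prop.rank} from \eqref{prop.rhs-of-2.repr.frank}: one needs that an identity of the form $\Phi(z,\bar s)=\sum \overline{g_j(s)}f_j(z)$, holomorphic in $z$ for each $s$ and "antiholomorphic in $s$", genuinely extends to $\Phi$ holomorphic in $(z,\zeta)$ with $\zeta$ free — this is standard (the left side is already manifestly holomorphic in $z$ and, via \eqref{prop.lhs}, holomorphic in $\bar{z'}$, so one just renames $\bar{z'}\to\zeta$) but should be stated cleanly. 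Finally, constructing the annihilating ODE $L$ with entire coefficients of controlled growth, and checking that applying it under the integral sign in \eqref{prop.rank} is legitimate, requires the dominated-convergence estimates that the $\DF_{1,-}$ condition is designed to supply.
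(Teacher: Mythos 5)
Your reduction to the two--variable identity is essentially correct (modulo a slip in bookkeeping: after renaming $\bar z'\to\zeta$ the kernel must be $e^{z\bar w+\zeta w}$, not $e^{zw+\zeta w}$; if both exponents were holomorphic in $w$ the transform would depend only on $z+\zeta$ and the whole scheme would collapse immediately), and the ``polarization'' is indeed just a renaming, since $(z,z')\mapsto(z,\bar z')$ is onto $\C^2$. The genuine gap is in your inductive step. After applying the annihilating operator $L_z=\sum_{i}c_i(z)\partial_z^i$ you obtain, for each fixed $z$ and all $\zeta$,
\begin{equation*}
\int_{\C}F(w)\,\o(w)\Bigl(\sum_i c_i(z)\bar w^{\,i}\Bigr)e^{z\bar w}\,e^{\zeta w}\,d\l(w)=0 .
\end{equation*}
For fixed $z$ this is the vanishing of a transform against the single holomorphic kernel $e^{\zeta w}$ only, and that transform is \emph{not} injective on functions of $w$: for instance $\int_{\C} w\,e^{\zeta w}e^{-|w|^2}d\l(w)\equiv0$, since $\int_{\C}e^{\zeta w}e^{-|w|^2}d\l(w)=\pi$ for all $\zeta$. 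Injectivity (recovery of $F\o$, ultimately via the Fourier transform) requires \emph{both} variables $z$ and $\zeta$ to be free, and exactly that freedom is destroyed by $L_z$, whose coefficients depend on $z$: the integrand is no longer a single fixed function transformed in two variables. In fact the relation displayed above carries no new information at all --- it merely says that for each $\zeta$ the function $z\mapsto\F(z,\zeta)$ satisfies the ODE whose solution space is spanned by $h_1,\dots,h_N$, which is a restatement of the rank-$N$ decomposition you started from. Hence the conclusion ``that function vanishes for a.e.\ $w$, for every $z$'' is unjustified, and the induction does not close. The same non-injectivity undermines your base case as formulated: setting $\zeta=0$ and varying $z$ does not determine $F$ (e.g.\ $F(w)=\bar w$ gives $\int\bar w\,e^{z\bar w}e^{-|w|^2}d\l(w)=\partial_z\pi=0$ for all $z$); one must keep both variables, or restrict to the Fourier slice, to get $F=0$ even when $N=0$.

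What is missing is an analytic ingredient, not more algebra. The paper's proof restricts to $u=\k_{iz}$, $v=\k_{-iz}$, so that $\F(z)=\tb(\k_{iz},\k_{-iz})$ is the Fourier transform of $F\o$ as in \eqref{ThL.4}; the Riemann--Lebesgue lemma then forces all mixed derivatives $\partial^{\a}\dbar^{\b}\F$ to decay at infinity, while the finite-rank side identifies them with the bilinear expressions $\sum_j f_j^{(\a)}(-iz)\overline{g_j^{(\b)}(iz)}$. A convexity argument (Santal\'o's inequality for polar sets) converts this simultaneous decay into decay of the product of Wronskians $W(f_1,\dots,f_N)W(g_1,\dots,g_N)$, which is entire and therefore vanishes identically by Liouville; this yields linear dependence among the $f_j$ or the $g_j$, lowers the rank by one, and induction finishes, with the rank-zero case handled by Fourier injectivity. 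Your annihilating-ODE device would have to be replaced by some mechanism of this kind --- one that exploits decay at infinity (or otherwise retains the two free variables) to force a Wronskian to vanish --- before the proposal can work.
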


\begin{proof} By the definition of finite rank sesquilinear forms,  there exist functions $f_j,g_j\in \Bc^0,\ j=1,\dots, N$ such that

\begin{equation}\label{ThL.2}
    \tb (u,v)=\sum_{j=1}^N (u,g_j)(f_j,v)
\end{equation}
for all $u,v$ being the reproducing kernels at some points.
Following \cite[Lemma 5.2]{BauLe} we make a special choice of $u,v$ in \eqref{ThL.2}: $u=\k_{iz}$, $v=\k_{-iz}$, $z\in \C$. Using the reproducing property \eqref{2.repro}, we obtain
\begin{multline}\label{ThL.3}
  \F(z) := \tb(\k_{iz},\k_{-iz})=\sum_{j=1}^N (\k(iz,.),\overline{g_j})(f_j,\k(-iz,.))\\=\sum_{j=1}^N \overline{g_j(iz)}f_j(-iz).
\end{multline}
On the other hand,
\begin{gather}
    \F(z) = \tb(\k_{iz},\k_{-iz})=(F\k_{iz},\k_{-iz})=\int_{\C}
    F(w)\exp(-iz\overline{w}-iw \overline{z})d\n(w)\nonumber
    \\
    =\int_{\C}F(w)\o(w)\exp(-2i\re(z\overline{w}))d\l(w)=\FF(F\o)(2x),
    \,\,\,\, x\in \R^2,\label{ThL.4}
\end{gather}
where   $\FF$ is the Fourier transform in $\R^2$ (here we identify the complex plane  $\C=\C_z$ with the real plane $\R^2=\R^2_x$, $x=(x_1,x_2), z=x_1+ix_2$).
For derivatives $\partial^{\a,\b}\F(z):=\partial^\a\dbar^\b \F(z)$ we have
\begin{gather}
  \partial^{\a,\b}\F(z)=  (-i)^{\a+\b}\int \int_{\C}F(w)\o(w)\overline{w}^\a w^{\b} \exp(-2i\re(z\overline{w})d\l(w)\nonumber \\
  =(-i)^{\a+\b}\FF(F(w)\o(w)\overline{w}^\a w^{\b})(2x).\label{ThL.5}
\end{gather}
By the properties of the Fourier transform, since $F(w)\o(w)$ decays exponentially, and all functions $F(w)\o(w)\overline{w}^\a w^{\b}$ belong to $L^1(\R^2,d\l)$, we obtain that the functions $\partial^{\a,\b}\F(z)$ decay at infinity for all $\a,\b$.

Next, using \eqref{ThL.3} we calculate the derivatives of $\F$:
\begin{multline}
   \partial^{\a,\b}\F(z)=\partial^\a \dbar^\b\sum_{j=1}^N \overline{g_j(iz)}f_j(-iz)\\=\sum_{j=1}^N{(-i)}^{\a+\b}f_j^{(\a)}(-iz)\overline{g_j^{(\b)}(iz)}.\label{ThL.6}
\end{multline}
So, by the decay of these derivatives, we have
\begin{equation}\label{ThL.7}
    \left| \sum_{j=1}^N f_j^{(\a)}(-iz)\overline{g_j^{(\b)}(iz)}\right|\le \e(z)
\end{equation}
for all $\a,\b<N$, with some function $\e(z)$ tending to zero at infinity.

Now we introduce  vector-functions $\Fb_j:\C\to \C^N, \ j=0,\dots, N-1$, by
$\Fb_j(z)=(f_1^{(j)},\dots,f_N^{(j)})$. For any fixed $z\in \C$ we consider a convex compact set $K^{\Fb}_z\subset \C^N$
invariant with respect to componentwise multiplication by the phase factor, $(z_1,\dots,z_N)\mapsto (e^{i\theta}z_1,\dots,e^{i\theta}z_N)$, $\theta\in[0,2\pi)$, and
generated by the vectors $
\Fb_j(z), j=0,\dots,N-1$. The volume of this compact is equivalent  (up to a factor
depending only on $N$) to the square of the absolute value of the determinant of the matrix $\Fb$,
\begin{equation*}
\Fb=    \left(\begin{array}{c}
       \Fb_0(z) \\
       \dots \\
       \Fb_{N-1}(z)
     \end{array}\right),
\end{equation*}
which is exactly the square of the modulus of the Wronskian 
\newline\noindent $W(f_1,\dots,f_N)$ of the system of functions $f_j, j=1,\dots,N$.

Indeed, this volume is equivalent to that of the parallelepiped, centered at the origin and
generated by $F_j,iF_j$, $0\le j<N$, in $\mathbb C^N=\mathbb R^{2N}$,
which is equal to the square of modulus of the determinant of the matrix
\begin{equation*}
    \left(\begin{array}{cc}
       \mathop{\rm Re\,} \Fb& \mathop{\rm Im\,} \Fb \\
       \mathop{\rm -Im\,} \Fb& \mathop{\rm Re\,} \Fb
     \end{array}\right).
\end{equation*}
Furthermore,
\begin{multline*}
\mathop{\rm det\,}\left(\begin{array}{cc}
       \mathop{\rm Re\,} \Fb& \mathop{\rm Im\,} \Fb \\
       \mathop{\rm -Im\,} \Fb& \mathop{\rm Re\,} \Fb
     \end{array}\right)=
     \mathop{\rm det\,}\left(\begin{array}{cc}
       \mathop{\rm Re\,} \Fb+i\mathop{\rm Im\,} \Fb& \mathop{\rm Im\,} \Fb-i\mathop{\rm Re\,} \Fb \\
       \mathop{\rm -Im\,} \Fb& \mathop{\rm Re\,} \Fb
     \end{array}\right)\\=\mathop{\rm det\,}\left(\begin{array}{cc}
       \mathop{\rm Re\,} \Fb+i\mathop{\rm Im\,} \Fb& 0 \\
       \mathop{\rm -Im\,} \Fb& \mathop{\rm Re\,} \Fb-i\mathop{\rm Im\,} \Fb
     \end{array}\right)=|\mathop{\rm det\,}(
       \mathop{\rm Re\,} \Fb+i\mathop{\rm Im\,} \Fb)|^2.
\end{multline*}

In a similar way, we associate with the system of functions $g_j$ the mappings $\Gb_j=(\overline{g_1^{(j)}},\dots,\overline{g_N^{(j)}})$ and the compact $K_z^{\Gb}$ with volume equivalent to the square of the modulus of the Wronskian $|W(g_1,\dots,g_N)|$.

By taking various convex combinations of inequality \eqref{ThL.7}, written for different $\a,\b$, we obtain that
\begin{equation}\label{ThL.8}
|X Y|\le \e(z)
\end{equation}
for any $X\in K^{\Fb}_z, Y\in K^{\Gb}_z$. In the geometrical language, \eqref{ThL.8} means that the convex set $K^{\Gb}_z$ is contained in the \emph{polar set}  of the  compact $\e(z)^{-1}K^{\Fb}_z$, i.e., $K^{\Gb}_z\subset (\e(z)^{-1}K^{\Fb}_z)^{\circ}$.

Now we apply the so called Santal\'o's inequality (for the precise
formulation we use here see \cite[Corollary 3.3]{Cor}): for every convex compact $K\in\C^N$ invariant with respect to componentwise multiplication by the phase factor, we have
\begin{equation}\label{ThL.9}
    \vol(K)\vol(K^\circ)\le C(N).
\end{equation}
By \eqref{ThL.8}, \eqref{ThL.9}, we obtain
\begin{equation*}
    \vol(K^{\Fb}_z)\vol(K^{\Gb}_z)\le C(N)\e(z),
\end{equation*}
and, therefore,
\begin{equation*}
    |W(f_1,\dots,f_N)W(g_1,\dots,g_N)|^2\le C(N)\e(z).
\end{equation*}
Since  $W(f_1,\dots,f_N)W(g_1,\dots,g_N)$ is an entire function in $\C^N$, its decay at infinity  implies, by Liouville's theorem, that it is identically zero, and therefore one of the factors above should be zero. If, say, $W(f_1,\dots,f_N)\equiv 0$, the function $f_N$ lies in the linear span of the functions $f_1,\dots,f_{N-1}$, so the rank of the operator is, in fact, $N-1$. The induction on the rank concludes the proof.
\end{proof}

If, in particular, $F$ is an arbitrary bounded function, then the Toeplitz operator can be defined in the usual way, it is associated with the sesquilinear form, and we   arrive at the resolution of the finite rank problem in its initial setting.

\begin{corollary} Suppose that $F$ is a bounded function in $\C$. If the Toeplitz operator $\Tb(F)$ in $\Bc$ has finite rank, then $F=0$.
\end{corollary}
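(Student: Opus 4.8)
The plan is to obtain the corollary as a direct consequence of Theorem \ref{3.Th.Lu form}: it suffices to verify that a bounded symbol fits the hypotheses of that theorem, and no genuinely new argument is required.

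First I would note that every bounded function $F$ belongs to $\DF_{1,-}$. Indeed, if $|F(z)|\le C$ for all $z$, then trivially $|F(z)|=O(e^{|z|^2-a|z|})$ as $|z|\to\infty$ for every fixed $a<\infty$, since $|z|^2-a|z|\to+\infty$. So the first hypothesis of the theorem is met.

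Next I would translate the operator-theoretic finite rank assumption into the form-theoretic one of Definition \ref{2.FRdefGen}. For bounded $F$ the Toeplitz operator $\Tb(F)$ is everywhere defined and bounded on $\Bc$, being a product of bounded operators, and by \eqref{2.form} its sesquilinear form agrees on $\Bc$ with the form $\tb_F$ of \eqref{2.sesqForm}; the Fubini interchange in \eqref{2.form} is legitimate because $F$ is bounded and $u,v\in\Bc\subset L^2(\C,d\n)$. In particular $(\Tb(F)\k_z,\k_{z'})=\tb_F(\k_z,\k_{z'})$ for all reproducing kernels. Now assume $\Tb(F)$ has finite rank, so that \eqref{2.FRbounded} holds with some $f_j,g_j\in\Bc$ for all $u\in\Bc$; then the equivalent identity \eqref{2.FRforms} holds for all $u,v\in\Bc$, and taking $u=\k_z$, $v=\k_{z'}$ yields exactly the representation \eqref{2.repr.frank}, with $f_j,g_j\in\Bc\subset\Bc^\circ$. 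Thus $\tb_F$ has finite rank on reproducing kernels in the sense of Definition \ref{2.FRdefGen}.

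Both hypotheses of Theorem \ref{3.Th.Lu form} now hold, and it gives $F=0$, completing the proof. There is no real obstacle to overcome here; the one point worth emphasizing is that Definition \ref{2.FRdefGen} was deliberately set up so as to allow $f_j,g_j\notin\Bc$, which is precisely what makes the classical finite rank condition for bounded symbols a genuine special case of it, so that the corollary follows without re-running the proof of the theorem.
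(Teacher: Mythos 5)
Your proposal is correct and follows essentially the same route as the paper, which deduces the corollary directly from Theorem \ref{3.Th.Lu form} by observing that a bounded symbol lies in $\DF_{1,-}$ and that the finite rank of the bounded operator $\Tb(F)$, with $f_j,g_j\in\Bc\subset\Bc^\circ$, is a special case of the form having finite rank on reproducing kernels. No gap here; the verification $\Bc\subset\Bc^\circ$ via the pointwise bound $|f(z)|\le\|f\|e^{|z|^2/2}$ is the only detail worth stating explicitly.
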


\begin{remark} The condition on the growth  of  $F$, i.e., the
requirement that $F\in\DF_{1,-}$, imposed in Theorem
\ref{3.Th.Lu form}, is almost sharp. It is easy to construct examples
(see, e.g., \cite{BauLe}, or \cite{GrVas}) of nontrivial functions $F$
such that $\o(w)F(w)$ decay at infinity  just a bit slower than
exponentially, but even the zero rank statement is wrong.
\end{remark}

\begin{remark} In \cite{RShir}, \cite{AlexRoz} and \cite{BRChoe1}, the finite rank theorem of Luecking has been extended to the multidimensional case.  The method used in \cite{RShir} and \cite{AlexRoz} is not tied to a particular method of proving the one-dimensional version of the theorem, and, with some minor modifications, can be adapted to prove the extension of Theorem \ref{3.Th.Lu form} to the case of Toeplitz operators in the
multidimensional Fock space. We do not go into details here since there exist a more direct way of establishing the finite rank property in $\C^d$. Namely,  Proposition 5.6 in \cite{BauLe} reduces the finite rank problem in any dimension to the one in $\C^1$, for the symbol being a function in $\DF_\cF$, $\cF< 1$,
and a simple modification of the argument used there gives the same reduction for the symbols in the class $\DF_{1,-}$. Thus, the multi-dimensional generalization of Theorem \ref{3.Th.Lu form} for symbols-functions  follows.
\end{remark}

\end{document}